\newtheorem{theorem}{Theorem}[section]
\newtheorem{lemma}{Lemma}[section]
\newtheorem{proposition}{Proposition}[section]
\theoremstyle{definition}
\theoremstyle{remark}
\newtheorem{remark}{Remark}[section]
\numberwithin{equation}{section}
\newtheorem{corollary}{Corollary}[section]
\numberwithin{equation}{section}
\newcommand{\M}{\sharp_{e}S^n\times S^n\sharp_{c} S^1\times S^{2n-1}}
\newcommand{\N}{\sharp_{c} S^1\times S^{n}}
\newcommand{\Z}{\mathbb Z}
\newcommand{\R}{\mathbb R}
\begin{document}

 \title[Critical points]{Examples of smooth maps with finitely
 many critical points in dimensions $(4,3)$, $(8,5)$ and $(16,9)$}
\author[L.Funar]{Louis Funar}
\address{Institut Fourier BP 74, UMR 5582, Universit\'e de
Grenoble I, 38402 Saint-Martin-d'H\`eres cedex, France}
\email{funar@fourier.ujf-grenoble.fr}

\author[C.Pintea]{Cornel Pintea}
\address{Department of Geometry, "Babe\c{s}-Bolyai" University, 400084 M. Kog\u{a}lniceanu 1,
Cluj-Napoca, Romania}
\email{cpintea@math.ubbcluj.ro}
\author[P.Zhang]{Ping Zhang}
\address{Department of Mathematics, Eastern Mediterranean University, Gazima\~{g}usa, North Cyprus,
via Mersin 10, Turkey}
\email{ping.zhang@emu.edu.tr}

\date{July 19, 2008}

\begin{abstract} We consider manifolds $M^{2n}$  which admit smooth maps 
into a connected sum of $S^1\times S^n$ with  only finitely many critical points, 
for $n\in\{2,4,8\}$, and compute the minimal number of critical points.
\end{abstract}

\subjclass{57 R 45, 55R 55, 58 K 05, 57 R 60, 57 R 70}

\keywords{Critical point, isolated singularity, 
Hopf fibration, supsension, homotopy sphere}

\maketitle

\section{Introduction and statement of the main result}

Let $\varphi(M^m,N^n)$ denote the minimal number of critical points
of smooth  
maps between the manifolds $M^m$ and $N^n$. 
When superscripts
are specified they denote the dimension of the respective manifolds.
We are interested below in the case when $m\geq n\geq 2$ and 
the manifolds are compact. 
The main problem concerning $\varphi$ is to characterize those
pairs of manifolds for which it is finite non-zero
and then to compute its value (see \cite{AndFun1}).

In \cite{AndFun1} the authors found that,
in small codimension $0\leq m-n-1\leq 3$, if $\varphi(M^m,N^{n+1})$ is finite
then $\varphi(M^m,N^{n+1})\in\{0,1\}$, except for the
exceptional pairs of dimensions
$(m,n+1)\in\{(2,2), (4,3), (4,2), (5,2), (6,3), (8,5)\}$. 
Notice that $(5,3)$ was  inadvertently included 
in \cite{AndFun1} among the 
exceptional pairs, but the proof carries out over this case. 
Moreover, under the finiteness hypothesis,
$\varphi(M,N)=1$ if and only if $M$ is the connected sum 
of a  smooth fibration over $N$ with an exotic sphere and not a fibration 
itself. There are two  essential ingredients in this result. First, there 
are local obstructions to the existence of isolated 
singularities, namely  the germs of smooth maps $\R^m\to \R^n$ 
having an isolated singularity at origin are actually locally topologically 
equivalent to a projection. Thus, these maps are topological fibrations. 
Second, singular points located in a disk cluster together.

The simplest exceptional case is that of (pairs of) surfaces,
which is completely understood by elementary means 
(see \cite{AF2} for explicit computations).
Very little is known for the other exceptional and generic (i.e. $m-n-1\geq 4$)
cases and even the case of pairs of spheres is unsettled yet.
In particular, it is not known whether $\varphi$ is bounded in terms 
only of the dimensions, in general. 

The aim of this note is to find  non-trivial examples 
in dimensions $(4,3)$, $(8,5)$  and $(16,9)$ inspired by the early 
work of Antonelli (\cite{Ant1,Ant3}). 
The smooth maps considered in \cite{Ant3} are so-called 
Montgomery-Samelson fibrations with finitely many singularities where
several fibers are pinched to points. According to \cite{Tim} 
these maps should be locally topologically equivalent to a cone 
over the Hopf fibration, in a neighborhood of a critical point. 

The main ingredient of our approach is the existence of global obstructions 
of topological nature to the clustering of genuine critical points in these 
dimensions. This situation seems rather exceptional and it permits us 
to obtain the precise value of $\varphi$ using only basic 
algebraic topology. 

Our computations show that $\varphi$ 
can take arbitrarily large even values. Thus the
behavior of $\varphi$ is qualitatively different from
what it was seen before in \cite{AndFun1}.

\begin{theorem}\label{comput}
Let $n\in \{2,4,8\}$, $e\geq c\geq 0$, with $c\neq 1$, and  
$\Sigma^{2n}$ be a homotopy $2n$-sphere.  If $n=2$ assume further that 
$\Sigma^{4}\setminus {\rm int}(D^4)$ embeds  smoothly into $S^4$, where 
$D^4$ is a smooth  4-disk.  
Then 
\[\varphi(\Sigma^{2n}\M,\N)=2e-2c+2\]
Here  $\N=S^{n+1}$ if $c=0$ and $\M=S^{2n}$
if $e=c=0$. 
\end{theorem}

The structure of the proof of the theorem is as follows.
We prove Proposition \ref{lowbound} which 
yields  a lower bound for the number of critical values
derived from topological obstructions of algebraic nature. 
The existence of a  non-trivial lower bound is not obvious 
since one might think that several singularities
could combine into a single more complicated singularity.
However, the proof uses only standard techniques of algebraic 
topology. 
The next step taken in section \ref{fibersum} is to construct
explicit smooth maps with any even 
number of singularities. This follows by taking fiber sums of elementary
blocks of maps coming naturally from Hopf fibrations. 
This construction is an immediate generalization of  the one 
considered by Antonelli in the case of two elementary blocks in (\cite{Ant1}, p.185-186). 
Then Proposition \ref{fsum} concludes the proof.

\begin{remark}
Observe that  $S^1\times S^{2n-1}$ fibers over
$S^1\times S^n$, when $n\in\{2,4,8\}$ so that the formula from Theorem \ref{comput} is still valid for 
$\Sigma^{2n}=S^{2n}$, $e=0$ and $c=1$. However,
we do not know  how to  evaluate $\varphi$ when $e\leq c-1$.
The present methods do not work for $e\geq c=1$ either. 
\end{remark}


\vspace{0.2cm}

{\small 
{\bf Acknowledgements}. The authors are indebted to  
Dennis Sullivan, Yuli Rudyak and Andr\'{a}s Sz\H{u}cs for useful discussions 
on this topic, during the M. M. Postnikov Memorial Conference at Bedlewo, 
June 2007. L. Funar was partially supported by the 
ANR Repsurf: ANR-06-BLAN-0311. 
C. Pintea was partially supported by the CNCSIS grant of type 
A, 8/1467/2007 and partially by the CNCSIS grant PN II, ID 523/2007.
P. Zhang thanks Jianzhong Pan, Haibao Duan and Kaiming Zhao of Institute of 
Mathematics of CAS in Beijing for their hospitality when part of this note 
was written in the summer of 2007. 
}

\section{A lower bound for the number of critical values}\label{lowbd}

\begin{proposition}\label{lowbound}
For any dimension $n\geq 2$, homotopy $2n$-sphere $\Sigma^{2n}$ 
and non-negative integers $e$ and $c$, 
with $c\neq 1$ we have: 
\[\varphi(\Sigma^{2n}\M,\N)\geq 2e-2c+2\]
Here  $\N=S^{n+1}$ if $c=0$ and $\M=S^{2n}$
if $e=c=0$. 
\end{proposition}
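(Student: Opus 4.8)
The strategy for the case $n\in\{2,4,8\}$ is to show that, whenever there is a smooth map $f\colon M\to N$ with finitely many critical points, it has exactly $\chi(M)$ of them, and then to evaluate $\chi(M)=2e-2c+2$; if there is no such $f$ then $\varphi(M,N)=+\infty$ and the bound is automatic. So fix such an $f$, write $M=\Sigma^{2n}\M$ and $N=\N$, let $v_1,\dots,v_r$ be the distinct critical values, let $p_{i,1},\dots,p_{i,s_i}$ be the critical points lying over $v_i$, and set $k=s_1+\cdots+s_r$.

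First I would record the local and semi-global shape of $f$. Since $M$ is compact, $f$ is proper, so over the connected set $N\setminus\{v_1,\dots,v_r\}$ (note $\dim N=n+1\ge 3$) it is a proper submersion, hence by Ehresmann's theorem a fibre bundle whose fibre $F$ is a closed $(n-1)$-manifold. Near a critical point the germ of $f$ is topologically the cone on the link fibration $S^{2n-1}\to S^n$; for $n\in\{2,4,8\}$ this link fibration is the Hopf fibration, with fibre $S^{n-1}$, by Timourian's theorem. In that model the fibre over the cone vertex is a single point, and a regular fibre over a value close to $v_i$ contains the vanishing cycle $S^{n-1}$ as a closed codimension-zero submanifold, i.e.\ as a union of connected components of $F$. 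Globally this means the singular fibre splits as a disjoint union
\[
\Sigma_i:=f^{-1}(v_i)=\{p_{i,1},\dots,p_{i,s_i}\}\ \sqcup\ F_i',
\]
where $F_i'\subseteq F$ is the union of the components of $F$ that do not collapse over $v_i$; in particular each $F_i'$ is again a closed $(n-1)$-manifold (possibly empty).

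The count then uses the compactly supported Euler characteristic $\chi_c$, which is additive along a decomposition into a closed subspace and its open complement, agrees with $\chi$ on compact spaces, and is multiplicative in fibre bundles with compact fibre. Splitting $M$ into the open set $f^{-1}(N\setminus\{v_1,\dots,v_r\})$ and the closed sets $\Sigma_i$ gives
\[
\chi(M)=\chi_c\big(f^{-1}(N\setminus\{v_1,\dots,v_r\})\big)+\sum_{i=1}^r\chi(\Sigma_i)
=\chi_c\big(N\setminus\{v_1,\dots,v_r\}\big)\cdot\chi(F)+\sum_{i=1}^r\big(s_i+\chi(F_i')\big).
\]
The parity of $n$ enters exactly here: $F$ and the $F_i'$ are closed odd-dimensional manifolds, so $\chi(F)=\chi(F_i')=0$, and the identity collapses to $\chi(M)=s_1+\cdots+s_r=k$. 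In particular this count is insensitive to how the critical points are distributed among the critical values, which settles the a~priori worry that several singularities might merge into one. Finally, the Betti numbers of $M=\Sigma^{2n}\M$ are nonzero only in degrees $0,1,n,2n-1,2n$, with values $1,c,2e,c,1$, so $\chi(M)=1-c+2e-c+1=2e-2c+2$; hence $k=2e-2c+2$ and $\varphi(M,N)\ge 2e-2c+2$, in fact with equality whenever $\varphi(M,N)$ is finite.

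The step I expect to be the real obstacle is the local conical normal form: it is exactly Timourian's theorem for $n\in\{2,4,8\}$ but is not available otherwise, whereas Proposition~\ref{lowbound} is stated for all $n\ge 2$. For $n\notin\{2,4,8\}$ one argues differently: a smooth map $\R^{2n}\to\R^{n+1}$ can have no isolated critical point at all, since its link would be a bundle $S^{2n-1}\to S^n$ whose fibre, by the homotopy exact sequence, is an $(n-2)$-connected closed $(n-1)$-manifold, hence homotopy equivalent to $S^{n-1}$, which by Adams' solution of the Hopf invariant one problem is impossible for $n\notin\{1,2,4,8\}$. Consequently every $f$ with finitely many critical points is a submersion, so a fibre bundle $M\to N$; again by the homotopy exact sequence its fibre is a homotopy $(n-1)$-sphere (for $n$ odd) or has $\chi(F)=0$ (for $n$ even), and $\chi(M)=\chi(N)\chi(F)$ then forces $e=c-1$, so that $2e-2c+2=0=\varphi(M,N)$; and if no submersion exists, $\varphi(M,N)=+\infty$. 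In every case the stated inequality holds.
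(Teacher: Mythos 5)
Your count via the compactly supported Euler characteristic is clean \emph{granted} the local normal form, but that normal form is precisely the unjustified step, and it is the point the paper is careful to avoid. Timourian's theorem applies to Montgomery--Samelson fibrations, i.e.\ maps which are already locally trivial fibrations away from finitely many singular \emph{fibers that are points}; it does not say that an arbitrary smooth map $M^{2n}\to N^{n+1}$ with finitely many critical points is, near each critical point, topologically the cone on a fibration $S^{2n-1}\to S^n$. For a general smooth germ $\R^{2n}\to\R^{n+1}$ with an isolated critical point there is no known conical structure theorem: the critical point need not be isolated in its fiber, the restriction of $f$ to a small sphere around it need not fiber over a small sphere around the critical value, and consequently your decomposition $f^{-1}(v_i)=\{\text{critical points}\}\sqcup F_i'$ with $F_i'$ a \emph{closed} $(n-1)$-manifold is unsupported -- the regular part of a singular fiber is in general a non-compact open manifold accumulating on the critical set, and then $\chi_c(\Sigma_i)$ is not $s_i$ plus zero. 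The same missing local structure undercuts your branch for $n\notin\{2,4,8\}$: the assertion that an isolated critical point would force a bundle $S^{2n-1}\to S^n$ (hence a Hopf-invariant-one element) presupposes exactly this fibered-link/cone structure, so the claimed nonexistence of isolated critical points, and hence the reduction to submersions, is not established. (In that branch you also assert the fiber of a global fibration is a homotopy sphere ``by the homotopy exact sequence''; for $c\ge 2$ even the connectedness of the fiber is not automatic and needs the Nielsen--Schreier/Hopfian argument.)

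The paper's proof is designed so that no local analysis at the critical points is needed and it works uniformly for all $n\ge 2$: one removes the whole preimage $V=f^{-1}(B)$ of the set of critical values, shows by a transversality argument that $M\setminus V\hookrightarrow M$ and $N\setminus B\hookrightarrow N$ are $n$-connected, applies Ehresmann to get a fibration $M\setminus V\to N\setminus B$, proves (using Nielsen--Schreier and the Hopfian property of free groups when $c\ge2$) that the generic fiber is a homotopy $(n-1)$-sphere, and then runs the Gysin sequence to get $\mathrm{rk}\,H_n(M\setminus V)=2c+|B|-2$, which dominates $\beta_n(M)=2e$ by the surjectivity of $H_n(M\setminus V)\to H_n(M)$. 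Note also that this bounds the number of critical \emph{values} (hence a fortiori critical points), and that your Euler-characteristic identity $\chi(M)=2e-2c+2$ uses $n$ even, whereas the proposition is stated and proved for every $n\ge 2$. If you want to salvage your approach, you would have to restrict to the class of maps for which the MS/conical structure is known, which is strictly weaker than the statement to be proved.
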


We will prove, more generally, the following:
\begin{proposition}\label{genca}
Let $M^{2n}$ and $N^{n+1}$ be  closed  connected orientable 
manifolds and $n\geq 2$. Assume that
$\pi_1(M)\cong \pi_1(N)$ is a free group ${\mathbb F}(c)$ on $c$ generators, 
$c\neq 1$  (with ${\mathbb F}(0)=0$) , 
$\pi_j(M)=\pi_j(N)=0$, for $2\leq j \leq n-1$ and $H_{n-1}(M)=0$. 
Then $\varphi (M,N)\geq \beta_{n}(M)-2c+2$, where $\beta_k$ denotes the
$k$-th Betti number.
\end{proposition}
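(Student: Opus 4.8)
The plan is to take any smooth $f\colon M\to N$ with finitely many critical points, let $B=\{b_1,\dots,b_v\}$ be its finite set of critical values, and prove the sharper inequality $\beta_n(M)\le v+2c-2$; since the number of critical points of $f$ is at least $v=\#B$, this yields $\varphi(M,N)\ge\beta_n(M)-2c+2$. Put $\dot N=N\setminus B$ and $\dot M=M\setminus f^{-1}(B)$. First I would observe that, as $f$ has only finitely many critical points, each singular fibre $f^{-1}(b_i)$ is an $(n-1)$-manifold outside a finite set, so $Z:=f^{-1}(B)$ is compact of covering dimension $n-1$, hence of codimension $n+1\ge 3$ in $M$. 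By Poincar\'e--Alexander--Lefschetz duality, $H_j(M,\dot M)\cong\check H^{2n-j}(Z)=0$ for $j\le n$, so $H_j(\dot M)\cong H_j(M)$ for $j\le n-1$ and $\beta_n(\dot M)\ge\beta_n(M)$; in particular, using the hypothesis $H_{n-1}(M)=0$, we get $H_{n-1}(\dot M;\mathbb{Q})=0$. Meanwhile $f\colon\dot M\to\dot N$ is a proper submersion over the connected base $\dot N$, hence by Ehresmann's theorem a locally trivial fibre bundle with closed orientable fibre $F^{n-1}$.

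Next I would identify $F$ rationally. From the bundle and $\pi_2(\dot N)=0$ one gets an injection $\pi_1(F)\hookrightarrow\pi_1(\dot M)\cong\pi_1(M)\cong\mathbb{F}(c)$, so $\pi_1(F)$ is free. Feeding $\pi_j(\dot N)=0$ for $2\le j\le n-1$, the known homology of $\dot M$, and the Nielsen--Schreier theorem into the rational Serre spectral sequence of $F\to\dot M\to\dot N$ forces (here the hypothesis $c\neq 1$ is used) that $F$ is connected and that $H_j(F;\mathbb{Q})=0$ for $2\le j\le n-2$. Poincar\'e duality on the closed orientable manifold $F^{n-1}$ then gives $H_1(F;\mathbb{Q})\cong H_{n-2}(F;\mathbb{Q})$, which vanishes for $n\ge 4$; since $\pi_1(F)$ is free this forces $\pi_1(F)=1$, so $F$ is a homotopy $(n-1)$-sphere. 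The low cases are direct: $F=S^1$ for $n=2$ and $F=S^2$ for $n=3$ (a closed orientable surface with free $\pi_1$ is $S^2$). In all cases $F$ is a rational homology $(n-1)$-sphere, and the bundle is rationally orientable because $\dot M$ and $\dot N$ are.

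Finally I would run the Gysin sequence. Rationally, the Serre spectral sequence of $F\to\dot M\to\dot N$ has only the rows $q=0$ and $q=n-1$ with trivial coefficients, and collapses to the Gysin sequence of a homology $S^{n-1}$-bundle, with Euler class $e\in H^n(\dot N;\mathbb{Q})$. A routine computation gives $\beta_0(\dot N)=1$, $\beta_1(\dot N)=c$, $\beta_j(\dot N)=0$ for $2\le j\le n-1$, $\beta_n(\dot N)=c+v-1$, $\beta_{n+1}(\dot N)=0$ ($\dot N$ being homotopy equivalent to $N$ with $v$ disjoint open disks removed), and the portion
\[H^0(\dot N)\xrightarrow{\ \smile e\ }H^n(\dot N)\xrightarrow{\ \pi^{*}\ }H^n(\dot M)\xrightarrow{\ \pi_{!}\ }H^1(\dot N)\xrightarrow{\ \smile e\ }H^{n+1}(\dot N)=0\]
gives $\beta_n(\dot M)=2c+v-1-\varepsilon$, where $\varepsilon=1$ if $e\neq 0$ and $\varepsilon=0$ otherwise. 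If $e=0$, the Gysin sequence splits and $H^{n-1}(\dot M;\mathbb{Q})\cong H^{n-1}(\dot N;\mathbb{Q})\oplus H^0(\dot N;\mathbb{Q})=\mathbb{Q}$, contradicting $H_{n-1}(\dot M;\mathbb{Q})=0$; hence $e\neq 0$ and $\beta_n(M)\le\beta_n(\dot M)=2c+v-2$, as wanted. The hard part is the middle step: there is no local normal form for $f$ near a critical point in this generality (only in the three dimension pairs of the title), so the singular fibres could a priori be complicated, and the identification of $F$ must be extracted entirely from the global homotopy hypotheses on $M$ and $N$ together with Poincar\'e duality on the fibre; a minor but genuine issue is pinning down $\dim Z$ so that $Z$ has codimension at least $3$.
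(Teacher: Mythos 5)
Your overall skeleton is the same as the paper's (Ehresmann fibration over $N\setminus B$, identify the generic fiber as an $(n-1)$-dimensional homology sphere using Nielsen--Schreier and $c\neq 1$, then a Gysin count giving $\beta_n(M\setminus V)=2c+|B|-2$ together with surjectivity onto $H_n(M)$), and your endgame Gysin computation, including the observation that the rational Euler class must be non-zero because $H^{n-1}(M\setminus V;\mathbb{Q})=0$, is fine. The genuine gap is in your substitute for the paper's connectivity lemma. You replace it by ``$Z=f^{-1}(B)$ has covering dimension $n-1$, hence codimension $\geq 3$, so use Alexander duality'': this yields exactly the homological comparisons you state ($H_j(M\setminus V)\cong H_j(M)$ for $j\le n-1$, surjectivity in degree $n$), but nothing about homotopy groups, and yet your fiber identification indispensably uses $\pi_1(M\setminus V)\cong\pi_1(M)\cong{\mathbb F}(c)$: freeness of $\pi_1(F)$ comes from the injection $\pi_1(F)\hookrightarrow\pi_1(M\setminus V)$, and the Nielsen--Schreier contradiction that makes $F$ connected needs the image of $\pi_1(M\setminus V)$ in ${\mathbb F}(c)$ to have rank at most $c$, i.e.\ needs $\pi_1(M\setminus V)$ to be generated by $c$ elements. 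Codimension of a compactum does not control the fundamental group of its complement (Antoine--Blankinship Cantor sets have codimension $\geq 3$ and non-simply-connected complement), so this step is not justified by your dimension-theoretic argument and is nowhere else argued. The paper's Lemma 2.3 is exactly the missing ingredient: away from the finitely many critical points $V$ is a genuine properly embedded $(n-1)$-submanifold, so a map of a $k$-disk with $k\le n$ can first be pushed off the finite critical set and then made transverse to, hence disjoint from, the rest of $V$; this gives $n$-connectivity of $M\setminus V\hookrightarrow M$ and all the $\pi_j$ comparisons you need.

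A second, related soft spot: you propose to extract ``$F$ connected and $H_j(F;\mathbb{Q})=0$ for $2\le j\le n-2$'' from the rational Serre spectral sequence of $F\to M\setminus V\to N\setminus B$. Over a base with free non-abelian $\pi_1$ the coefficient systems $\mathcal H_q(F;\mathbb{Q})$ may be non-trivial, and vanishing of $H_*(M\setminus V;\mathbb{Q})$ in the relevant range only constrains (co)invariants of these local systems, which can vanish without the fiber homology vanishing; and connectivity of $F$ is a $\pi_0$/$\pi_1$ statement the spectral sequence cannot see beyond transitivity of the monodromy on $\pi_0(F)$. Once the $n$-connectivity of the inclusion is in place, the clean route is the paper's: the homotopy exact sequence gives $\pi_1(F)=0$ (Hopfian property plus $\pi_2(N\setminus B)=0$) and $\pi_j(F)=0$ for $j\le n-2$, so $F$ is a homotopy sphere, and no twisted-coefficient issues arise. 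Finally, note that your route uses $H_{n-1}(M)=0$ also when $n=2$ (where together with $\pi_1(M)\cong{\mathbb F}(c)$ it forces $c=0$), whereas the paper's proof of the $n=2$ case avoids this hypothesis by appending $H_1(M\setminus V)\stackrel{\cong}{\to}H_1(N\setminus B)$ to the Gysin sequence; this matters because the application in Proposition 2.1 needs $n=2$ with $c\geq 2$.
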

\begin{proof}
Let $B=B(f)$ denote the set of critical values of a smooth map
$f:M\to N$. We will prove that the
cardinality $|B|$ of $B(f)$ satisfies 
$|B|\geq \beta_{n}(M)-2c+2$, which will imply our claim. 
Set $V=f^{-1}(B(f))\subset M$. We can assume that $f$ has finitely many 
critical points, since otherwise the claim of Proposition 
\ref{genca} would be obviously verified. 

The following two Lemmas do not depend on the homotopy assumptions 
of Proposition \ref{genca}. 

\begin{lemma}\label{finite}
If $A$ is a nonempty finite subset of a connected closed orientable 
manifold $N^{n+1}$, then $\beta_{n}(N\setminus
A)=\beta_{n}(N)+|A|-1$.
\end{lemma}
\begin{proof}Clear from the homology exact sequence of the pair $(N,N\setminus A)$. 
\end{proof}

\begin{lemma}\label{connect}
 If $M^{n+q+1}$ and $N^{n+1}$ are smooth manifolds and
$f:M\rightarrow N$ is a smooth map with finitely many critical
points, then the inclusions $M\setminus
V\hookrightarrow M$ and $N\setminus B\hookrightarrow N$ are $n$-connected.
\end{lemma}
\begin{proof}
This is obvious for  $N\setminus
B\hookrightarrow N$. It remains to prove
that $\pi_k(M,M\setminus V)\cong
0$ for $k\leq n$. Take $\alpha:(D^{k},
S^{k-1})\rightarrow (M, M\setminus V)$  to be an arbitrary
smooth map of pairs.
Since the critical set $C(f)$ of $f$ is finite and contained in
$V$, there exists a small homotopy of $\alpha$ relative to
the boundary such that the image $\alpha(D^k)$ avoids $C(f)$.
By compactness there exists a neighborhood $U$ of $C(f)$ 
consisting of disjoint balls centered at the critical points such that 
$\alpha(D^k)\subset M\setminus U$.  We can arrange by a small isotopy 
that $V$ becomes transversal to $\partial U$.

Observe further that $V\setminus U$ consists of regular
points of $f$ and thus it is a properly embedded sub-manifold of
$M\setminus U$. General transversality arguments show that $\alpha$
can be made transverse to  $V\setminus U$ by a small
homotopy. By dimension counting this means
that $\alpha(D^k)\subset M\setminus U$ is disjoint from $V$ and thus
the class of $\alpha$ in  $\pi_k(M,M\setminus V)$ vanishes.
\end{proof}

The restriction of $f$ to $M\setminus V$ is a proper submersion 
and thus  the restriction $f|_{M\setminus V}$ is an open map.  
In particular, $f(M\setminus V)\subset N\setminus B$ is an open subset. 
On the other hand, the closed map lemma  states that a proper map between 
locally compact Hausdorff spaces is also closed.  Thus $f(M\setminus V)$ is 
also closed in $N\setminus B$ and hence $f(M\setminus V)=N\setminus B$. 
According to Ehresmann's theorem, the restriction 
$f|_{M\setminus V}$ is then a locally trivial smooth fibration over $N\setminus B$ with
compact smooth fiber $F^{n-1}$ (see \cite{Dim}).

\begin{lemma}
Assume that $c\neq 1$. Then  the
generic fiber $F$ is homotopy equivalent to the $(n-1)$-sphere. 
\end{lemma}
\begin{proof}
When $c=0$ the claim is a simple consequence of the homotopy sequence 
of the fibration  $M\setminus V\to N\setminus B$. 
 
Let us assume henceforth that $c\geq 2$. 
Consider the last terms of the homotopy exact sequence of this fibration:
\[ \to \pi_1(M\setminus V)\stackrel{f_*}{\to} \pi_1(N\setminus B)\stackrel{p}{\to} \pi_0(F)\to \pi_0(M\setminus V)\to \pi_0(N\setminus B)\]
From Lemma \ref{connect} $M\setminus V$ and $N\setminus B$ are connected and
$\pi_1(M\setminus V)\cong \pi_1(N\setminus B)\cong {\mathbb F}(c)$. 
If $F$ has $d \geq 2$ connected components then the kernel $\ker p$ of $p$ is a
finite index proper subgroup of  the free non-abelian
group ${\mathbb F}(c)$. Thus, by the Nielsen-Schreier theorem,  
$\ker p$ is a free group of rank $d(c-1)+1$, where 
$d$ is the number of components of $F$, and hence of rank larger than $c$.
On the other hand, by exactness of the sequence above, $\ker p$ is also the image of $f_*$ and
thus it is a group of rank at most $c$. This contradiction shows that
$F$ is connected.

If $n=2$ then $F$ is a circle, as claimed.

Let now $n>2$. We obtained above that
$f_*$ is surjective. Since finitely generated free groups are
Hopfian any surjective homomorphism ${\mathbb F}(c)\to {\mathbb F}(c)$
is also injective. Since $\pi_2(N\setminus B)\cong \pi_2(N)=0$
and $f_*$ is injective we derive that $\pi_1(F)=0$.
The remaining terms of the homotopy exact sequence of the fibration
and Lemma \ref{connect} show then that
$\pi_j(F)=0$ for $2\leq j\leq n-2$. Thus $F$ is a homotopy sphere. 
\end{proof}

\begin{lemma}\label{surj}
Suppose that $B\neq\emptyset$. 
\begin{enumerate}
\item We have
$H_1(N\setminus B)\cong \Z^c$, $H_{n}(N\setminus B)=\Z^{|B|+c-1}$ and $H_{n+1}(N\setminus B)=0$.
\item If $n > 2$ then $H_{n-1}(M\setminus V)=0$.
\item The homomorphism $H_n(M\setminus V)\to H_n(M)$ induced by the
inclusion map is surjective.
\end{enumerate}
\end{lemma}
\begin{proof}
The first two assertions are consequences of Lemma \ref{finite},
Lemma \ref{connect} and standard algebraic topology.
For instance, $H_1(N\setminus B)\cong H_1(N)=\Z^c$.
The last claim follows from Lemma \ref{connect} and the long exact 
sequence in homology of the pair $(M,M\setminus V)$.
\end{proof}

\begin{lemma}\label{rank}
If $B\neq \emptyset$ and $c\neq 1$ then the rank of $H_{n}(M\setminus V)$ is $2c+|B|-2$.
\end{lemma}
\begin{proof}
The Gysin sequence of the fibration  $M\setminus V\to N\setminus B$ (whose fiber is  
a homotopy sphere)  reads:
\[ \to H_m(M\setminus V)\to  H_m(N\setminus B)\to H_{m-n}(N\setminus B)
\to H_{m-1}(M\setminus V)\to \]
Consider the exact subsequence
\[ H_{n+1}(N\setminus B)\to H_1(N\setminus B)\to H_{n}(M\setminus V)
\to H_n(N\setminus B)\to H_0(N\setminus B)\to H_{n-1}(M\setminus V)\]
If $n>2$ then  the first and the last terms vanish.

The Euler characteristic of this subsequence is zero by exactness and thus
the rank of $H_{n}(M\setminus V)$ is $2c+|B|-2$ by Lemma \ref{surj}.

When $n=2$, we can complete the exact sequence above by
adding one more term to its right, namely
$H_{1}(M\setminus V)\stackrel{f_*}{\to} H_{1}(N\setminus B)$.
However, $f_*$ is actually the map induced in homology by the isomorphism
$f_*:\pi_1(M)\to \pi_1(N)$ and thus an isomorphism itself.
The argument with the Euler characteristic can be applied again and
yields the claimed result.
\end{proof}

From Lemma \ref{rank} and Lemma \ref{surj} (3) we derive that
\[ 2c+|B|-2 \geq \beta_{n}(M)\]
and the proposition is proved.
\end{proof}

\begin{corollary}
If $M^{2n}$ is a smooth $(n-1)$-connected
closed manifold, then
$$\varphi(M,\Sigma^{n+1})\geq \beta_{n}(M)+2,$$
where $\Sigma^{n+1}$ is a homotopy sphere.
\end{corollary}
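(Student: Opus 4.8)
The plan is to recognize this corollary as nothing more than the special case $c=0$, $N=\Sigma^{n+1}$ of Proposition \ref{genca}, so that the whole task reduces to checking that the hypotheses of that proposition are satisfied. First I would note that since $M^{2n}$ is $(n-1)$-connected and $n\geq 2$, it is in particular simply connected, hence orientable (as $w_1(M)=0$), and closed by assumption; moreover $\pi_1(M)=0={\mathbb F}(0)$, so the relevant value of the parameter is $c=0$. The hypothesis $\pi_j(M)=0$ for $2\leq j\leq n-1$ is immediate, and the Hurewicz theorem gives $H_j(M)=0$ for $1\leq j\leq n-1$, in particular $H_{n-1}(M)=0$.

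Next I would verify that $N=\Sigma^{n+1}$ qualifies: a homotopy $(n+1)$-sphere is a closed connected orientable manifold, and $\pi_j(\Sigma^{n+1})\cong\pi_j(S^{n+1})=0$ for every $j\leq n$. Hence $\pi_1(N)=0\cong\pi_1(M)$ and $\pi_j(N)=0$ for $2\leq j\leq n-1$. All the hypotheses of Proposition \ref{genca} now hold with $c=0$, and substituting this value into its conclusion gives $\varphi(M,\Sigma^{n+1})\geq \beta_n(M)-2\cdot 0+2=\beta_n(M)+2$, as desired.

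I expect no genuine obstacle here, as the corollary is a direct specialization rather than a new argument. The only two points needing a word of justification are that simple connectivity forces orientability and that the low-dimensional homotopy groups of an exotic sphere coincide with those of the standard one; both are standard. If one wants to be careful about degenerate cases, one should also flag the implicit standing assumption $n\geq 2$ inherited from Proposition \ref{genca}, without which the statement already fails for $M=\Sigma^{n+1}=S^2$ via the identity map.
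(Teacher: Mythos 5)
Your proposal is correct and matches the paper's intent: the corollary is stated there without proof precisely because it is the specialization $c=0$, $N=\Sigma^{n+1}$ of Proposition \ref{genca}, and your verification of the hypotheses (simple connectivity giving orientability, Hurewicz giving $H_{n-1}(M)=0$, and the homotopy-sphere condition on $N$) is exactly what is needed.
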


\begin{remark}
The present approach does not work for $c=1$. In fact, fibers 
might have several connected components, each one being  
a homotopy sphere.  In the absence of an upper bound of the number  of components 
the Leray-Serre spectral sequence leads only to a trivial lower bound for the 
number of critical values. 
\end{remark}

\section{Fiber sums of suspensions of Hopf fibrations}\label{fibersum}
\begin{proposition}\label{fsum}
Let $n\in \{2,4,8\}$, $e\geq c\geq 0$, with $c\neq 1$, and  
$\Sigma^{2n}$ be a homotopy $2n$-sphere.  If $n=2$ assume further that 
$\Sigma^{4}\setminus {\rm int}(D^4)$ embeds  smoothly into $S^4$, where 
$D^4$ is a smooth  4-disk.  
Then 
\[\varphi(\Sigma^{2n}\M,\N)\leq 2e-2c+2\]
\end{proposition}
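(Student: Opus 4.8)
The plan is to build explicit smooth maps $\Sigma^{2n}\M\to\N$ with exactly $2e-2c+2$ critical points by fiber-summing elementary building blocks, following the two-block construction of Antonelli. First I would isolate the basic block: the suspension of the Hopf fibration. The Hopf map $h:S^{2n-1}\to S^n$ (for $n\in\{2,4,8\}$) suspends to a map $S^{2n}\to S^{n+1}$ with exactly two critical points, namely the two suspension points, whose links are the Hopf fibration itself; near each critical point the map looks like the cone on $h$. Equivalently, removing small disk neighborhoods of the two critical points yields the trivial fibration $h\times\mathrm{id}:S^{2n-1}\times[-1,1]\to S^n\times[-1,1]$ over a cylinder. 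This is the block with $e=c=0$ after replacing the round $S^{2n}$ by an arbitrary homotopy sphere $\Sigma^{2n}$: one writes $\Sigma^{2n}=D^{2n}\cup_\partial (\text{twisted }D^{2n})$ and, since any diffeomorphism of $S^{2n-1}$ is isotopic through the fibration data (or, in the $n=2$ case, using the embedding hypothesis on $\Sigma^4\setminus\mathrm{int}(D^4)$), one can spread the exotic gluing over a regular part of the suspension without creating new critical points, so that $\varphi(\Sigma^{2n},S^{n+1})\le 2$.

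Next I would describe the two kinds of fiber sum that raise the indices $e$ and $c$. Given two maps $f_i:M_i^{2n}\to N_i^{n+1}$ ($i=1,2$) that are, near chosen regular values $y_i\in N_i$ and over small regular balls, the trivial fibration $F\times D^{n+1}\to D^{n+1}$ with the standard fiber $F\simeq S^{n-1}$, one removes these trivial pieces and glues the boundaries $F\times S^n$ by a fiber-preserving diffeomorphism. The result is a smooth map on a manifold obtained from $M_1,M_2$ by an appropriate ambient connected sum, with the same total number of critical points $\varphi(f_1)+\varphi(f_2)$, and base $N_1\sharp N_2$. Taking $f_1$ to be a block contributing two critical points (a suspended Hopf map, giving one extra $S^n\times S^n$ summand upstairs per such sum — this is the $e$-increment) and iterating produces, from $e$ copies of the basic block fiber-summed appropriately, a map $\sharp_e(S^n\times S^n)\to S^{n+1}$ with $2e$ critical points; since $S^{2n}$ contributes the base point count correctly one checks the arithmetic gives $2e$. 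Similarly there is a self-fiber-sum (gluing two boundary spheres of a single connected block, i.e. a "handle" construction over the base) which adds a $S^1\times S^n$ summand to the base $\N$ and simultaneously an $S^1\times S^{2n-1}$ summand to the total space, while \emph{reducing} the number of critical points by two each time — this is the $-2c$ term. Performing $c$ such self-sums and $e$ block-sums, starting from the appropriate seed, yields a smooth map $\Sigma^{2n}\M\to\N$ whose critical point count is $2e-2c+2$, with the edge conventions ($\N=S^{n+1}$ when $c=0$, $\M=S^{2n}$ when $e=c=0$) matching the construction's degenerate cases.

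The remaining points are bookkeeping: verifying that the fiber-sum operations have the advertised effect on the diffeomorphism type of the total space and base (this is where $n\in\{2,4,8\}$ and the existence of the Hopf fibration are used — the fiber is genuinely $S^{n-1}$ and the relevant sphere bundles trivialize), that each self-fiber-sum genuinely removes two critical points (the two suspension points of one Hopf block are cancelled against each other when the block is bent into a loop over a new $S^1$ factor of the base), and that the exotic sphere $\Sigma^{2n}$ can be absorbed exactly as in the $e=c=0$ discussion above. I expect the \textbf{main obstacle} to be the honest verification that the exotic structure on $\Sigma^{2n}$ is compatible with the fiber-sum construction — i.e. that $\Sigma^{2n}\M$ (and not some other homotopy-equivalent manifold) is really the total space of the assembled map — which in dimension $2n=4$ forces the extra hypothesis that $\Sigma^4\setminus\mathrm{int}(D^4)$ embeds in $S^4$, so that the twisting can be pushed into a collar disjoint from all critical points; in the higher dimensions $2n\in\{8,16\}$ the analogous step is automatic because the pseudo-isotopy / gluing ambiguity lands in a group that does not obstruct the construction. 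A secondary technical point is checking that the blocks can be arranged to have a common standard form near the gluing regular values, so that the fiber-preserving gluing diffeomorphisms exist; this follows from the explicit product structure coming from the suspension description of the Hopf block.
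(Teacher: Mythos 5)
Your overall architecture (suspended Hopf blocks assembled by fiber sums) is the right family of constructions, but the mechanism you rely on for the $-2c$ term is wrong as stated, and your arithmetic for the $e$ term contradicts the paper's own lower bound. A self-fiber-sum of the kind you define -- deleting trivialized pieces $F\times D^{n+1}$ over \emph{regular} balls and gluing the two resulting $S^{n-1}\times S^{n}$ boundary pieces of one connected block to each other -- does not cancel any critical points: the suspension points lie elsewhere and survive, so the count is unchanged, while the total space gains both an $S^n\times S^n$ and an $S^1\times S^{2n-1}$ summand. This is exactly how the paper treats cycle edges: it assembles $e-c+1$ blocks along a graph with $e$ edges and $c$ independent cycles, every gluing being over regular values, so the critical points are simply twice the number of blocks, $2(e-c+1)=2e-2c+2$; no cancellation ever occurs. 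Your parenthetical picture ("the two suspension points are cancelled when the block is bent into a loop") describes a different operation -- excising cone neighborhoods of two critical points and gluing their links $S^{2n-1}$ fiberwise over spheres around the critical values -- which is not the fiber sum you set up, and for which you would still have to produce a fiber-preserving, orientation-compatible gluing of the Hopf-fibred links and identify the resulting summand as the standard $S^1\times S^{2n-1}$. Relatedly, your intermediate claim that $e$ blocks give $\sharp_e(S^n\times S^n)\to S^{n+1}$ with $2e$ critical points is impossible: Proposition \ref{lowbound} forces at least $2e+2$ there (a chain of $e+1$ blocks gives $\sharp_e(S^n\times S^n)$ with $2e+2$ critical points), so the bookkeeping by which you arrive at $2e-2c+2$ is not coherent as written, even though the final number happens to match.

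Two further steps you dismiss are in fact the substantive content. First, absorbing the homotopy sphere: the assertion that "any diffeomorphism of $S^{2n-1}$ is isotopic through the fibration data" is false -- if exotic regluings could be spread over the regular part without cost there would be no exotic $2n$-spheres to worry about. The paper instead invokes the Huebsch--Morse theorem: a smooth homeomorphism $\Sigma^{2n}\sharp X\to X$ with a single critical point, placed in a ball centered at an already existing critical point (and, for $n=2$, glued in as a homotopy $4$-disk, which is where the hypothesis that $\Sigma^4\setminus{\rm int}(D^4)$ embeds in $S^4$ enters); this keeps the count at $2e-2c+2$ and, since $\Theta^{2n}$ is a finite group, realizes every $\Sigma^{2n}$. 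Second, the identification of the total space of each block sum is not bookkeeping: one needs that the generic fiber $S^{n-1}$ is unknotted in $S^{2n}$ (Haefliger's metastable-range theorem), so that the block is an interior connected sum of copies of $S^n\times D^n$, and then that the double of $S^n\times D^n$ is $S^n\times S^n$ up to a homotopy sphere -- proved in the paper by an explicit Kirby-calculus/handle argument, which is unavoidable precisely in the case $n=2$ where no general classification is available.
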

\begin{proof}
Recall from \cite{AndFun1} that
$\varphi(S^{2n},S^{n+1})=2$ if $n=2,4$ or $8$. This is realized
by taking suspensions of both spaces in the Hopf fibration
$h:S^{2n-1}\rightarrow S^n$, where $n=2,4$ or $8$, and then
smoothing the new map at both ends. The extension
$H:S^{2n}\rightarrow S^{n+1}$ has precisely two critical
points.  This is also the basic example of a Montgomery-Samelson
fibration with finitely many singularities, as considered in
\cite{Ant3}. Antonelli has considered in \cite{Ant1} manifolds 
which admit maps with two critical points into spheres, by gluing together 
two copies of $H$. 

Our aim  is to define fiber sums of Hopf fibrations
leading to other examples of pairs of manifolds with finite $\varphi$ 
using Antonelli's construction for more general  gluing patterns. 
Identify $S^{n+1}$ (and respectively $S^{2n}$) with the suspension
of $S^n$ (respectively $S^{2n-1}$) and thus equip it with the coordinates
$(x,t)$, where $|x|^2+t^2=1$, and $t\in [-1,1]$.  We call the coordinate $t$
the height of the respective point.
The suspension $H$ is then given by:
\[H(x,t)=\left(\psi(|x|)h\left(\frac{x}{|x|}\right), t\right)\]
where $\psi:[0,1]\to [0,1]$ is a smooth  increasing function infinitely flat at $0$ such that 
$\psi(0)=0$ and $\psi(1)=1$.

Pick up a number of points $x_1,x_2,\ldots, x_k\in S^{n+1}$ and
their small enough disk neighborhoods $x_i\in D_i\subset S^{n+1}$,
such that:
\begin{enumerate}
\item the projections of $D_i$ on the height coordinate axis are disjoint;
\item the $D_i$'s do not contain the two poles, i.e. their projections on the
height axis are contained in the open interval $(-1,1)$.
\end{enumerate}
Let $A_k$ be the manifold with boundary obtained by deleting  from
$S^{n+1}$ of the interiors of the disks $D_i$, for $1\leq i\leq k$.
Let also $B_k$ denote the preimage $H^{-1}(A_k)\subset S^{2n}$ by the
suspended Hopf map.  Since $H$  restricts to a trivial fibration
over the disks $D_i$ it follows that  $B_k$ is a manifold, each
one of its boundary components being diffeomorphic to $S^{n-1}\times S^n$.
Moreover, the boundary components are endowed with a natural
trivialization induced from $D_i$.

Let now $\Gamma$ be a finite connected graph. 
To each vertex $v$ of valence $k$ we associate
a block $(B_v,A_v, H|_{B_v})$, which will be denoted  $(B_k,A_k, H|_{B_k})$, 
when we want to emphasize the dependence on the number of boundary components.
Each boundary component of $A_v$ or $B_v$ corresponds to an edge incident
to the vertex $v$.
We define the fiber sum along $\Gamma$
as the following  triple $(B_{\Gamma}, A_{\Gamma}, H_{\Gamma})$:
\begin{enumerate}
\item $A_{\Gamma}$ is the result of gluing the manifolds with boundary
$A_v$, associated to the vertices $v$ of $\Gamma$, by identifying, for each
edge $e$ joining the vertices $v$ and $w$ (which might coincide) 
the pair of boundary components in $A_v$ and $A_w$ 
corresponding to the edge $e$. The identification is made by using  an orientation-reversing 
diffeomorphism of the boundary spheres. 
\item $B_{\Gamma}$ is the result of gluing the manifolds with boundary
$B_v$, associated to the vertices $v$ of $\Gamma$, by identifying, for each
edge $e$ joining the vertices $v$ and $w$ (which might coincide)
the boundary components in $B_v$ and $B_w$ corresponding to the pair 
of boundary components in $A_{\Gamma}$ associated to $e$. Gluings in $B_{\Gamma}$ are realized by 
some orientation-reversing diffeomorphisms which respect the product structure over 
boundaries of  $A_{v}$ and $A_{w}$.  
\item As the boundary components are identified 
the natural trivializations of the boundary components
of $B_v$ agree in pairs. Thus the maps $H_v$ induce a well-defined map
$H_{\Gamma}:B_{\Gamma}\to A_{\Gamma}$.
\end{enumerate}
In the case where the graph $\Gamma$ consists of two vertices joined by an edge 
this construction is essentially that given in (\cite{Ant1}, p.185-186).

\begin{proposition}
The map $H_{\Gamma}:B_{\Gamma}\to A_{\Gamma}$ has $2m$ critical points, where
$m$ is the number of vertices of $\Gamma$.
\end{proposition}
\begin{proof} Clear, by construction.
\end{proof}
We say that $\Gamma$ has $c$ independent cycles if the rank of
$H_1(\Gamma)$ is $c$. This is equivalent to ask $\Gamma$
to become a tree only after removal of at least  $c$ edges.
Moreover,  $c=e-m+1$ where $e$ denotes the number of edges.
\begin{proposition}
If $\Gamma$ has $e$ edges and $c$ cycles, i.e. $e-c+1$ vertices,
then for a suitable choice of the gluing diffeomorphisms data  
$B_{\Gamma}$ is diffeomorphic to $\Sigma^{2n}\M$ (where $\Sigma^{2n}$ is a 
homotopy sphere, which is trivial when $n=2$), while
$A_{\Gamma}$ is diffeomorphic to
$\N$. Here  $\N$ states for $S^{n+1}$ when $c=0$.

\end{proposition}
\begin{proof}
The sub-blocks $A_k$ are diffeomorphic to the connected sum of $k$ copies 
of disks $D^{n+1}$
out of their boundaries.  When gluing  together two such distinct sub-blocks 
(since there is an edge in $\Gamma$ joining the corresponding vertices) 
the respective pair of disks leads to a factor $D^{n+1}\cup_{\mu} D^{n+1}$, where 
$\mu:S^n\to S^n$ is the identification map.  If  $\mu$ is a reflection  
then the factor  $D^{n+1}\cup_{\mu} D^{n+1}$ is the double of $D^{n+1}$ and 
hence  diffeomorphic to $S^{n+1}$. 
 
When gluing all sub-blocks in the pattern of the graph $\Gamma$
the only non-trivial contribution comes from the cycles. Each cycle
of $\Gamma$ introduces a 1-handle. Thus the manifold $A_{\Gamma}$
is  diffeomorphic to $\sharp_{c} S^1\times S^{n}$.

Further  we have a similar result for the sub-blocks $B_k$: 
\begin{lemma}
The  sub-blocks $B_k$ are diffeomorphic to the  connected sum of 
$k$ copies of the product $S^{n}\times D^{n}$ out of their boundaries. 
\end{lemma}
\begin{proof} 
One obtains $B_k$ by deleting out $k$ copies of $H^{-1}(D_i)$; each  $H^{-1}(D_i)$ is
a tubular neighborhood of the (generic) fiber of $H$ and thus
diffeomorphic to $S^{n-1}\times D^{n+1}$.

When $k=1$ the generic fiber of $H$ is an $S^{n-1}$ embedded in $S^{2n}$, namely 
the  image  of the fiber of the Hopf fibration  in the 
suspension sphere $S^{2n}$.  The generic fiber is unknotted in $S^{2n}$,  as an immediate 
consequence of Haefliger's classification of smooth embeddings. In fact, according to \cite{Hae3}, any
smooth embedding of $S^{k}$ in $S^{m}$ is unknotted, 
i.e. isotopic to the boundary of a standard ball, if the dimensions satisfy  the meta-stable range 
condition $k<\frac{2}{3}m-1$.
This implies that the complement of a regular neighborhood of the fiber is
diffeomorphic to the complement of a standard sphere and thus to
$S^{n}\times D^{n}$.

When $k\geq 2$ we remark that the fibers over the points $x_i\in D_i$ lie at
different heights and thus they are contained in disjoint slice 
spheres of the suspension $S^{2n}$. This implies that these fibers
are unlinked, i.e. isotopic to the boundary of a set of disjoint standard
balls. Thus the complement of  a regular neighborhood of
their union is diffeomorphic to the connected sum of their
individual complements, and therefore to  the  connected sum of 
$k$ copies of the product
$S^{n}\times D^{n}$ out of their boundaries.
\end{proof}

Let us stick for the moment to the case when $k=1$ and we have two diffeomorphic sub-blocks 
$B_v$ and $B_w$, each one having one boundary  component, to be glued together. 
We choose the identification diffeomorphism $\nu:\partial B_v\to \partial B_w$ 
to be the one  from the construction of the double of $B_v$.  Observe that 
the maps $B_v\to A_v$ and $B_w\to A_w$ glue together to form a well-defined smooth map 
$B_v\cup_{\nu}B_w\to A_v\cup_{\mu}A_w$, as already noticed in (\cite{Ant1}, p.185). 

\begin{lemma}
The factor $B_v\cup_{\nu} B_w$ is diffeomorphic to $\Sigma^{2n}\sharp S^{n}\times S^{n}$, where 
$\Sigma^4=S^4$. 
\end{lemma}
\begin{proof}
Consider first the case $n=2$, which is the most interesting  one since the result cannot follow from 
general classification results.  The sub-block  $D^2\times S^2$ can be easily 
described by a Kirby diagram (see \cite{Go}, chapter 4), which encodes its handlebody structure. 
As $D^2\times S^2$ is  obtained from $D^4$ by throwing away the regular 
neighborhood of an unknotted circle (i.e. a $1$-handle) it can be described as the result of attaching 
the dual 2-handle on an unknotted circle with framing $0$. There is also a dual 
handlebody decomposition of $D^2\times S^2$ in which each $j$-handle generates a $4-j$ handle. 
The double of $D^2\times S^2$ is then described by putting together the two 
handlebody descriptions (the usual one and the dual one) and thus is made of $D^4$ with 
two 2-handles and finally a 4-handle capping off the boundary component.  

Attaching maps of $4$-handles are  orientation preserving diffeomorphisms of $S^3$, and by a classical 
result of Cerf these are isotopic to identity. Thus there exists a unique way to attach a 4-handle 
to a 4-manifold with boundary $S^3$. By the way, recall that a theorem of Laudenbach and 
Poenaru  (\cite{LP}) shows  that there is only one way up to global diffeomorphism 
to attach $3$-handles and $4$-handles to a $4$-manifold with boundary $\sharp_k S^1\times S^2$ 
in order to obtain a closed manifold.   

Now it is easy to see that the new 2-handle  (in the handlebody structure of the double 
of $D^2\times S^2$) is attached along a meridian  circle of the former 2-handle with 
$0$ framing. Thus a Kirby diagram of the double of $D^2\times S^2$ consists of a Hopf link 
with both components having framing $0$, and it is well-known that this diagram is 
also that of $S^2\times S^2$.  See also (\cite{Go}, Example 4.6.3)  for more details.

This argument applies as well for  $n\geq 3$. We have a handle decomposition of 
$D^n\times S^n$ as $D^{2n}$ with one $n$-handle attached. The set of framings on 
a sphere $S^{n-1}$ in $\partial D^{2n}$ is acted  upon freely transitively by 
$\pi_{n-1}(O(n))$. Moreover $\pi_3(O(4))\cong \pi_7(O(8))\cong\Z\oplus \Z$ (see \cite{Mi}).  
Then the $n$-handle is attached on an unknotted $(n-1)$-sphere with trivial framing,  
i.e. the $(0,0)$-framing.  Observe that this is the canonical framing associated to 
the identity  attaching  map ${\rm id}_{S^{n-1}\times D^n}$ (see e.g. \cite{Go} Example 4.1.4.(d)).  
Further the double of $D^n\times S^n$  is obtained by putting together 
the usual handlebody and its dual. As above 
we can describe the double as the result of attaching two $n$-handles and one 
$2n$-handle. The dual $n$-handle is attached on a meridian $(n-1)$-sphere which links 
once the  former  attaching $(n-1)$-sphere and has trivial framing.  
The union of the two spheres is the analogue of the Hopf link in $S^{2n-1}=\partial D^{2n}$.  
As it is well-known $S^n\times S^n$ can also be obtained by adding two $n$-handles 
along this high-dimensional  trivially-framed Hopf link and a $2n$-handle. 

The only difference between the cases $n>2$ and $n=2$  
is that the result of attaching a $2n$-handle for $n>2$
is not unique, as there  might exist diffeomorphisms of $S^{2n-1}$ which are not isotopic to identity. 
However,  detaching  and   then reattaching  a $2n$-handle with a reflection diffeomorphism 
as gluing map   will create an exotic sphere (for $n\geq 4$)  and thus the 
double is diffeomorphic to $\Sigma^{2n}\sharp S^n\times S^n$ for some homotopy sphere 
$\Sigma^{2n}$. 
 \end{proof}

When gluing all sub-blocks in the pattern of the graph $\Gamma$ such that each identification map 
is  $\nu$ then each pair of sub-blocks determines a factor $\Sigma^{2n}\sharp S^n\times S^n$. 
If there are no cycles in $\Gamma$ then we obtain a connected sum of such factors, namely 
$\Sigma^{2n}\sharp_e S^n\times S^n$.  Finally, 
the only additional non-trivial contribution comes from the cycles. Each cycle
of $\Gamma$ introduces an extra 1-handle. 
Thus the manifold $B_{\Gamma}$ is diffeomorphic to  $\Sigma^{2n}\M$.
\end{proof}

In order to prove Proposition \ref{fsum} it suffices now to show that  one can attach a homotopy sphere 
to the manifolds $B_{\Gamma}$ and still have the same number of critical points. 
This can be realized by removing a small disk centered at
a critical point and gluing it back differently, when $n\neq 2$,  
and respectively gluing back a homotopy 4-disk, when $n=2$. 
We consider only those homotopy 4-disks which embed  smoothly into $S^4$. 
In this way, using the theorem of Huebsch and Morse for $n=2$ 
(see \cite{HM} and  also \cite{AndFun1} where this argument is carried out in detail) we obtain 
a smooth map with the same (non-zero) number of  critical points, namely $2e-2c+2$. 
Since the homotopy spheres form a finite abelian group under the connected sum 
one can obtain this way all manifolds of the form $\Sigma^{2n}\M$. 
\end{proof}
\begin{remark}
Recall that the group  $\Theta^k$ of homotopy $k$-spheres 
is  $\Theta^8=\Theta^{16}=\Z/2\Z$.  
\end{remark}

\begin{remark}
By twisting $\mu$  by a  diffeomorphism of $S^n$ which is not isotopic to identity 
(e.g. when $n=8$) one could obtain exotic spheres factors in $A_{\Gamma}$. 
More interesting  examples correspond to twisting $\nu$ by some  orientation preserving 
diffeomorphism   $\eta:S^{n-1}\times S^n\to S^{n-1}\times S^n$ 
which still respect  the product structure.  
For instance we can consider some  $\eta$ induced from a map
$S^{n-1}\to SO(n+1)$ whose homotopy class is an element of $\pi_{n-1}(SO(n+1))$. 
It seems that all examples obtained by twisting  are still diffeomorphic to $\Sigma^{2n}\M$.  
\end{remark}

\section{Examples with $\varphi=1$}
The result of \cite{AndFun1} shows that
if $\varphi(M^m,N^{n+1})$ is finite non-zero
(small codimension non-exceptional dimensions) then
$\varphi(M^m,N^{n+1})=1$ and $M^m$ should be diffeomorphic
to $\Sigma^m\sharp \widehat{N}$, where $\Sigma^m$ is an exotic sphere and 
$\widehat{N}$ is the total space of a smooth fibration, such that $M^m$ is not fibered over $N$.
Actually this construction might produce 
non-trivial examples in any codimension. 

\begin{proposition}
If $\Sigma^m$ is an exotic sphere  (for $m=4$ we assume that $\Sigma^4\setminus {\rm int}(D^4)$ 
embeds smoothly in $S^4$) and $\widehat{N}\to N$ a  smooth fibration then 
$\varphi(\Sigma^m\sharp \widehat{N}, N)\in\{0,1\}$.
\end{proposition}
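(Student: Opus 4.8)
The plan is to construct, for the given smooth fibration $\pi:\widehat N\to N$ with fibre $F^{m-n}$ and a homotopy $m$-sphere $\Sigma^m$ (subject to the stated hypothesis when $m=4$), a smooth map $g:\Sigma^m\sharp\widehat N\to N$ with at most one critical point; since $\varphi\ge 0$ always, this immediately gives $\varphi(\Sigma^m\sharp\widehat N,N)\in\{0,1\}$. First I fix a point $p\in\widehat N$, put $q=\pi(p)$, and pick a small disk $D^n\subset N$ containing $q$ over which $\pi$ is trivial, together with a small smooth ball $B\cong D^m$ around $p$ with $B\subset\pi^{-1}(D^n)$; the ball may be chosen so that $\pi|_B$ is a trivial fibration onto a neighbourhood of $q$ diffeomorphic to $D^n$. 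Recall next that $\Sigma^m\sharp\widehat N$ is obtained from $\widehat N$ by deleting ${\rm int}(B)$ and gluing back the homotopy disk $\Delta^m=\Sigma^m\setminus{\rm int}(D^m)$ along a diffeomorphism $\phi$ of the boundary spheres, with $\Sigma^m=D^m\cup_\phi D^m$. For $m\ne 4$ the homotopy disk $\Delta^m$ is diffeomorphic to the standard disk $D^m$ (by the smooth $h$-cobordism theorem when $m\ge 6$, and trivially when $m\le 5$), whereas for $m=4$ one works directly with a homotopy $4$-disk, using the hypothesis that it embeds smoothly in $S^4$.

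I then define $g$ on the unchanged piece $\widehat N\setminus{\rm int}(B)$ to be the restriction of $\pi$; being a submersion, it has no critical points there. It remains to extend $g$ over the glued-in copy of $\Delta^m$ so that it agrees with $\pi$ along the common boundary and acquires at most one new critical point. Along $\partial\Delta^m$ the prescribed boundary value is the restriction of the trivial fibration $\pi|_B$ precomposed with $\phi$ --- that is, the restriction of a submersion $D^m\to D^n$ to the boundary sphere, twisted by a diffeomorphism of that sphere. The theorem of Huebsch and Morse (see \cite{HM}, and \cite{AndFun1} where exactly this type of extension, including the case of homotopy $4$-disks, is carried out in detail) supplies a smooth extension over $\Delta^m$ which coincides with a collar of the submersion near the boundary --- hence glues smoothly with $g$ on $\widehat N\setminus{\rm int}(B)$ --- and which is a submersion away from a single point at the centre of $\Delta^m$, where it has an isolated critical point. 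If $\phi$ happens to extend to a diffeomorphism of the disk then no critical point is needed and $\varphi=0$; in general $g$ has exactly one critical point, so $\varphi\le 1$.

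The substantive point --- and the reason the proposition is not a formality --- is this extension step: one must absorb the possibly exotic twisting diffeomorphism $\phi$ into a single isolated singularity of $g$, rather than being forced to break it up into several critical points, and this is precisely the content of the Huebsch--Morse theorem (supplemented, when $m=4$, by the embedding hypothesis on the homotopy $4$-disk). I would stress, finally, that nothing in this construction uses the codimension $m-n$ to be small: the argument of \cite{AndFun1} carries over verbatim to arbitrary codimension, which is the whole point of the proposition.
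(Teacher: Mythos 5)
Your proof is correct and takes essentially the same route as the paper: the paper also excises a ball, reglues the (homotopy) disk, and invokes the Huebsch--Morse theorem to produce a smooth homeomorphism $\Sigma^m\sharp\widehat{N}\to\widehat{N}$ with a single critical point, which composed with the submersion $\widehat{N}\to N$ gives the desired map. Your reformulation (extending the boundary data of the trivialized fibration directly over the homotopy disk) is only a cosmetic variant of that composition, so there is nothing substantively different to flag.
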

\begin{proof}
We obtain $\Sigma^m\sharp \widehat{N}$ from $\widehat{N}$ by excising
a ball $D^{n+1}$ and gluing it (or a homotopy 4-disk when $m=4$) 
back by means of a suitable diffeomorphism
$h$ of its boundary.
By a classical result of  Huebsch and Morse (\cite{HM}), there exists
a smooth homeomorphism  $\Sigma^m\sharp \widehat{N}\to \widehat{N}$
which has only one critical point located in the ball
$D^{n+1}$. This provides a smooth map $\Sigma^m\sharp \widehat{N}\to N$
with one critical point. 
\end{proof}
\begin{remark}
Notice however that $\Sigma^m\sharp \widehat{N}$
might still be fibered over $N$, although not diffeomorphic 
to $\widehat{N}$. This is so when
$\widehat{N}\to N$ is the Hopf fibration $S^7\to S^4$ and 
 $\Sigma^7\sharp \widehat{N}$ is a Milnor exotic sphere, namely 
a $S^3$-fibration over $S^4$ with Euler class $\pm 1$.
\end{remark}

\begin{remark}
The manifold $M^m=\Sigma^m\sharp S^{m-n-1}\times S^{n+1}$ is not diffeomorphic 
to $S^{m-n-1}\times S^{n+1}$ if  $\Sigma^m$ is an exotic sphere (see  
\cite{Sch}). Thus, the proposition above yields effective examples 
where $\varphi=1$.

If $\Sigma^8$ is the exotic 8-sphere 
which generates the group $\Theta^8=\Z/2\Z$ then    
$\varphi(\Sigma^8\sharp S^3\times S^5,S^5)=1$.
In fact $M^8=\Sigma^8\sharp S^3\times S^5$
is homeomorphic but not diffeomorphic to $S^3\times S^5$. 
Assume the contrary, namely that $M^8$ smoothly fibers over $S^5$.
Then the fiber should be a homotopy 3-sphere and hence $S^3$, by the
Poincar\'e Conjecture. The $S^3$-fibrations 
over $S^5$ are classified by  the elements of $\pi_4(SO(4))\cong \Z/2\Z\oplus \Z/2\Z$. 
There exist  precisely two homotopy types among the $S^3$-fibrations over $S^5$  
which admit cross-sections (see \cite{JW1}, p.217).  If $M^8$ is a $S^3$-fibration 
then it should have a cross-section because it is homotopy equivalent to $S^3\times S^5$ and 
the existence of a cross-section is a homotopy invariant 
(see \cite{JW1}, p.196, \cite{JW}, p.164).  However the two homotopy types correspond 
to  two distinct isomorphism types as spheres bundles. In fact they are classified by 
the image of $\pi_4(SO(3))\cong \Z/2\Z$ into $\pi_4(SO(4))$.  
This means that a $S^3$-fibration having a cross-section  
is either homotopy equivalent to the trivial fibration and then it is isomorphic to  the 
trivial fibration or else it has not the same homotopy type as $S^3\times S^5$.  
Observe also that there is only one $O(4)$-equivalence class and thus precisely 
two isomorphism classes of such $S^3$-fibrations without cross-sections  (\cite{JW}, p.164) . 
In particular, non-trivial $S^3$-fibrations  over $S^5$ cannot be homeomorphic to 
$M^8$ and this  contradiction shows that $M^8$ cannot smoothly fiber over $S^5$.  
\end{remark}

\bibliographystyle{amsplain}

\end{document}